\newtheorem{thm}{Theorem}[section]
\newtheorem*{theorem}{Theorem}
\newtheorem{defn}[thm]{Definition}
\newtheorem{lemma}[thm]{Lemma}
\newtheorem{prop}[thm]{Proposition}
\newtheorem{cor}[thm]{Corollary}
\newtheorem{rmk}[thm]{Remark}
\newtheorem{fact}[thm]{Fact}
\newtheorem{question}[thm]{Question}
\newcommand\e\epsilon
\def\indsym#1#2{%
  \setbox0=\hbox{$\m@th#1x$}%
  \kern\wd0%
  \hbox to 0pt{\hss$\m@th#1\mid$\hbox to 0pt{$\m@th#1^{#2}$}\hss}%
  \lower.9\ht0\hbox to 0pt{\hss$\m@th#1\smile$\hss}%
  \kern\wd0}
\def\nindsym#1#2{%
  \setbox0=\hbox{$\m@th#1x$}%
  \kern\wd0%
  \hbox to 0pt{\hss$\m@th#1\not$\kern1.4\wd0\hss}
  \hbox to 0pt{\hss$\m@th#1\mid$\hbox to 0pt{$\m@th#1^{\,#2}$}\hss}%
  \lower.9\ht0\hbox to 0pt{\hss$\m@th#1\smile$\hss}%
  \kern\wd0}
\def\dotminussym#1#2{%
  \setbox0=\hbox{$\m@th#1-$}%
  \kern.5\wd0%
  \hbox to 0pt{\hss\hbox{$\m@th#1-$}\hss}%
  \raise.6\ht0\hbox to 0pt{\hss$\m@th#1.$\hss}%
  \kern.5\wd0}
\newcommand{\dotminus}{\mathbin{\mathpalette\dotminussym{}}}
\def \d{\mathbf{D}}
\title{Oracle computability of conditional expectations onto subfactors}
\author{Isaac Goldbring}
\thanks{I. Goldbring was partially supported by NSF CAREER grant DMS-1349399.}
\address{Department of Mathematics\\University of California, Irvine, 340 Rowland Hall (Bldg.\# 400),
Irvine, CA 92697-3875}
\email{isaac@math.uci.edu}
\urladdr{http://www.math.uci.edu/~isaac}
\begin{document}

\begin{abstract}
We initiate the effective study of conditional expectations onto subfactors.  Our main result is that if $M$ is an existentially closed II$_1$ factor with a w-spectral gap subfactor $N$, then the conditional expectation function onto $N$ can be computed from a (Turing) oracle that computes a presentation of $M$, the inclusion of $N$ into $M$, and a spectral gap function for the pair $(M,N)$.
\end{abstract}

\maketitle

\section{Introduction}

Suppose that $M$ is a tracial von Neumann algebra and $N$ is a subalgebra of $M$.  In this paper, we consider the following question:  How ``hard'' is it to compute the conditional expectation function $E_N:M\to N$?  In other words, given $x\in M$, how ``hard'' is it to compute the distance from $x$ to $N$?  The main result of this paper gives an upper bound for the difficulty of computing $E_N$ in the case that $M$ is a separable \emph{existentially closed} II$_1$ factor and $N$ is a property (T) subfactor:

\begin{theorem}
Suppose that $M$ is a separable existentially closed II$_1$ factor with a property (T) subfactor $N$.  Suppose that we are given a presentation $M^\#$ of $M$ and a Kazhdan presentation $N^\dagger$ of $N$.  Suppose that $\d$ is a (Turing) oracle such that $M^\#$ is $\d$-computable and the inclusion $i:N^\dagger\to M^\#$ is $\d$-computable.  Then the conditional expectation map $E_N:M^\#\to N^\dagger$ is $\d$-computable.
\end{theorem}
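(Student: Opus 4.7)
The plan is to reduce the theorem to the $\d$-computability of the scalar function $d(x,N) := \|x - E_N(x)\|_2$, uniformly in $x \in M^\#$. Once this is available, one recovers $E_N(x)$ as a $\d$-name inside $N^\dagger$ as follows: effectively enumerate rational points $y$ of $N^\dagger$ via the Kazhdan presentation, compute $\|x - i(y)\|_2$ using the $\d$-computable inclusion and the presentation $M^\#$, and retain those $y$ with $\|x - i(y)\|_2 < d(x,N) + 2^{-k}$; by uniqueness of the $L^2$-minimizer, any two such $y$ lie in an effectively small $\|\cdot\|_2$-ball around $E_N(x)$, so this produces a $\d$-Cauchy representative. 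The same enumeration immediately yields upper semi-computability of $d(x,N)$: the infimum of the $\d$-computable quantities $\|x - i(y)\|_2$ over rational $y \in N^\dagger$ is $\d$-upper-semi-computable and converges down to $d(x,N)$.

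The substantive direction is lower semi-computability, that is, the $\d$-recognition of $d(x,N) > r$ for rational $r$. A priori this is a universal statement over $N$; the strategy is to recast it as an existential statement over $M$, which can then be verified using $M^\#$. Both hypotheses are needed: the Kazhdan presentation $N^\dagger$ encodes a computable finite $F \subseteq N$ and a computable spectral gap function $\delta : \mathbb{Q}^+ \to \mathbb{Q}^+$ such that any $(F,\delta(\e))$-almost $N$-central unit vector in an $N$-bimodule is $\e$-close to an $N$-central vector, while existential closedness of $M$ guarantees that existential formulas in $M^\#$ have the same value in $M$ as in any extension. Concretely, in an ultrapower $M^\CU$, property (T) should produce a unitary $v \in N' \cap M^\CU$ for which $\|[v,x]\|_2$ recovers $d(x,N)$ up to $\delta$-controlled error; morally $v$ arises from a spectral projection for the $N$-bimodule action on $L^2(M) \ominus L^2(N)$ carrying the $x - E_N(x)$ mass. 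The statement that an $F$-almost-central unitary $v \in M$ with $\|[v,x]\|_2$ at least a prescribed rational $r'$ exists is then an $\exists$-formula in $M^\#$ whose value is preserved by existential closedness, and enumerating its approximate witnesses yields lower semi-computability of $d(x,N)$.

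The main obstacle I anticipate is the explicit construction and error analysis of this existential certificate: one must pin down a precise $\exists$-formula $\varphi_r(x)$ over $M^\#$, built from unitaries of $M$ and parameters from $F$, whose satisfaction forces $d(x,N) > r$ within $\delta$-controlled error, and verify that all constants involved (the cardinality of $F$, the values of the Kazhdan function $\delta$, and the search parameters) are $\d$-computable from $M^\#$ and $N^\dagger$. The interplay between the analytic rigidity of $(F,\delta)$ and the model-theoretic content of existential closedness is the crux: the former converts $d(x,N) > r$ into a bimodule-rigidity statement witnessed by $v$, and the latter lets such witnesses be found effectively inside $M$ itself. Combining the two semi-computabilities then yields $\d$-computability of $d(x,N)$ and thence of $E_N$.
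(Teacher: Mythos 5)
Your overall skeleton is the same as the paper's: reduce to computing $d(x,N)$ (the paper does this via the Pythagorean identity, exactly as in your uniqueness-of-minimizer remark), get upper bounds by enumerating rational points of $N^\dagger$ through the $\d$-computable inclusion, and certify lower bounds by searching in $M^\#$ for approximate witnesses to an existential formula asserting the existence of an almost-unitary $u$ that almost commutes with finitely many generators of $N$ while $\|[u,x]\|_2$ is large; the Kazhdan/spectral-gap data is then used, as you say, to upgrade such an approximate witness to a genuine unitary of $N'\cap M$, which forces $d(x,N)\geq r-\e$ since conjugation by it fixes $N$ pointwise (note the correct normalization: one compares $\|[u,x]\|_2$ with $2\,d(x,N)$, not $d(x,N)$).

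However, there is a genuine gap at the step you yourself flag as the main obstacle: why does the existential certificate have the right value when $d(x,N)=r$? Your proposed mechanism --- that property (T) ``should produce'' a unitary $v\in N'\cap M^{\mathcal U}$ with $\|[v,x]\|_2$ recovering $d(x,N)$, morally as a spectral projection for the $N$-bimodule $L^2(M)\ominus L^2(N)$ --- does not work and is not how the rigidity is used. Property (T) enters only in the ``soft-to-hard'' direction (almost central implies near central); it cannot manufacture elements of the relative commutant with prescribed large commutator with $x$, and indeed for a general inclusion (even with $N$ property (T)) one can have $N'\cap M=\mathbb C$ while $x\notin N$, so no construction internal to $M$, its bimodules, or a plain ultrapower can succeed without using existential closedness \emph{together with} a construction in a genuine extension. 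The paper's Lemma to this effect builds the witness externally: let $P=\langle N,b\rangle$, form the amalgamated free product $M*_N P$, in which the second copy $b'$ of $b$ satisfies $d(b,b')=2r$, and then take Ueda's HNN extension with respect to the embedding fixing $N$ and sending $b\mapsto b'$; this produces a II$_1$ factor extending $M$ containing a unitary $u$ commuting with $N$ with $\|[u,b]\|_2=2r$. Only then does existential closedness transfer approximate witnesses back into $M$, making the search on your ``second machine'' terminate. Without this (or an equivalent) construction, your lower semi-computability argument only ever certifies lower bounds that happen to be witnessed, and you cannot conclude that the two machines meet within $2^{-k}$; so the proposal as written does not yet prove the theorem.
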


While the terms in the previous theorem will be defined precisely throughout the paper, we now try to give some intuition as to what the theorem actually says.

We first say a few words about the class of existentially closed (e.c.) II$_1$ factors.  The notion of existentially closed object comes from model theory and is the model-theoretic generalization of the notion of algebraically closed field.  Roughly speaking, a II$_1$ factor $M$ is an e.c. factor if, whenever a system of equations with coefficients from $M$ has a ``solution'' in a II$_1$ factor extending $M$, then there is an approximate solution in $M$ itself.  Here, one can think of systems of equations as being described by moments in the free probability sense.  A purely ``semantic'' perspective on being existentially closed can be described using embeddings into ultrapowers and this will be the definition given in Section 4 below.  It is important to note that there is a large class of e.c. factors (whence the above result is not vacuous) although whether or not a concrete example of one can be given depends on the truth of the Connes Embedding Problem.

We now describe the remaining terms in the above theorem.  The result above is framed in the language of \emph{computability theory}.  In order to treat operator algebras in this framework, we need to encode a separable tracial von Neumann algebra using a countable amount of data.  Towards this end, we consider \emph{presentations} of II$_1$ factors, which are essentially dense countable *-subalgebras of the given von Neumann algebra that we imagine being able to input into a theoretical computer.  

In general, we do not expect that this theoretical computer should be able to (approximately) compute the algebraic operations and the 2-norm of the presentation.  Instead, our results are \emph{relative} in the sense that we allow computers with ``oracles'' meaning that the computers are allowed to also make queries to functions that are not in general computable.  (More on this in Section 2.) Thus, our main result says that if one has presentations of $M$ and $N$ such that one can compute the algebraic operations and 2-norm of the presentation of $M$ using some oracle $\d$ as well as how the presentation of $N$ ``sits'' inside of $M$, then this same oracle can, given an element $x$ in the presentation of $M$ and a rational tolerance $\epsilon$, compute an element of the presentation of $N$ that is within $\epsilon$ of $E_N(x)$.

Note that, in general, given an oracle that can compute the presentation of $M$ and an the inclusion of the presentation of $N$ in $M$, one can compute upper bounds for the distance of an element of $M$ to $N$ by simply calculating the distances between the element of $M$ and various elements of $N$ and recording every time one gets closer than previous calculations.  The nontrivial part is knowing when you cannot get any closer, namely, how do you know when to stop this procedure?  The import of our main theorem is that in the case of a property (T) subfactor of an e.c. factor, one can in fact determine when one has approximately achieved the minimum distance.

The argument proving the previous theorem is inspired by an argument of Macintyre \cite{Mac} on e.c. groups.  There, he proves that for each $n\geq 1$, there is a formula $\varphi(x_1,\ldots,x_n,y)$ in the language of groups such that, for any e.c. group $G$ and any $a_1,\ldots,a_n,b\in G$, one has that $b$ belongs to the subgroup of $G$ generated by $a_1,\ldots,a_n$ if and only if $\varphi(a_1,\ldots,a_n,b)$ is true in $G$.  In other words, while testing for membership in a subgroup is usually not a first-order concept, it actually is in e.c. groups and uniformly so over all e.c. groups.  Our original motivation was to try to adapt Macintyre's argument to the setting of e.c. factors (which seemed feasible as the main ingredients were amalgamated free products and HNN extensions, both of which make sense in the case of tracial von Neumann algebras) and the theorem above is what we obtained after adapting his arguments.

The formula $\varphi(x_1,\ldots,x_n,y)$ referred to in the previous paragraph is the existential statement asserting the existence of an element $u$ which commutes with the $x_i$'s yet does not commute with $y$.  (Clearly if such $u$ exists, then $y$ does not belong to the subgroup generated by $x_1,\ldots,x_n$; the miraculous part is that, in an e.c. group, the converse holds.)  If one tries to write down an analogous formula in the von Neumann algebra setting, then since the corresponding existence statement would be an approximate existence statement (as the existential quantifier is replaced by the approximate existential quantifier $\inf$), we would only know about an element $u$ which approximately commutes with the $x_i$'s.  In order to gain any traction, one would need to know that such a $u$ is near an element which actually commutes with the $x_i$'s.  It is this \emph{spectral gap} property of the property (T) subfactor $N$ that is needed to adapt Macintyre's work.  Thus, our main theorem is actually applicable in the wider context of spectral gap subfactors, the only issue being that the computability of a corresponding spectral gap function also becomes an extra parameter in the statement of the theorem.  This spectral gap function is automatically computable in the case of a property (T) subfactor using the existence of ``Kazdhan'' sets (a result due to Connes and Jones \cite{CJ}) and a presentation in which the Kazhdan set is computable.

In the next section, we briefly describe the basic facts from computability theory needed to understand the rest of the paper.  The two sections that follow describe the basic facts needed about spectral gap subfactors and existentially closed II$_1$ factors in order to understand the proof of the main theorem, which appears in the final section. 

We would like to thank Timothy McNicholl, Alexander Melnikov, and Andre Nies for patiently explaining the finer points of computable structure theory for metric structures.  We would also like to thank Adrian Ioana for useful discussions around spectral gap and property (T).


\section{Computable structure theory for II$_1$ factors}

\subsection{Turing oracles}

We begin this section with a brief discussion of the basic notions from computability theory; a good and accessible reference is \cite{Enderton}.

Computability theory studies the question of what it means for a function $f:\mathbb N^k\to \mathbb N$ to be ``computable.''  Na\"ively speaking, such a function $f$ should be computable if there is an algorithm\footnote{This algorithm is allowed to refer to basic arithmetic operations such as addition and multiplication.} such that, upon input $(a_1,\ldots,a_k)\in \mathbb N^k$, runs and eventually halts, outputting the result $f(a_1,\ldots,a_k)$.  There are many approaches to formalizing this heuristic (e.g. Turing-machine computable functions and recursive functions) and all known formalizations can be proven to yield the same class of functions.  This latter fact gives credence to the \emph{Church-Turing thesis}, which states that this aformentioned class of functions is indeed the class of functions that are computable in the na\"ive sense described above.  In the rest of this paper, we will never argue about this class of functions using any formal definition but will only argue informally in terms of some kind of algorithm or computer; this is often referred to as arguing using the Church-Turing Thesis.  

As discussed in the introduction, the functions that we are interested in are almost certainly not computable in the sense of the previous paragraph.  Instead, we will consider a relative form of computability in the following sense:  given a function $\d:\mathbb N^l\to \mathbb N$, a \textbf{$\d$-algorithm} is an algorithm in the usual sense that is also allowed to make ``queries'' to an ``oracle'' which has access to the function $\d$.  Thus, for example, in the course of running the algorithm, the computer is allowed to query the oracle and ask what is the value of $\d(15)$ and then use that information in the computation.  A function $f$ as above is then said to be \textbf{$\d$-computable} (or that \textbf{$\d$ computes $f$}) if it is computable using a $\d$-algorithm.  Of course, if $\d$ were itself computable, then $f$ would be computable as well.

In the rest of this paper, we often write statements such as ``Suppose that $\d$ is an oracle that computes the functions $f_1$ and $f_2$.''  In practice, one would often know that each $f_i$ is $\d_i$-computable for some oracles $\d_1$ and $\d_2$.  However, standard coding tricks allow us to find a single oracle $\d$ that can compute functions that both $\d_1$ and $\d_2$ can compute (and there is a least such $\d$ in a precise sense).  Consequently, we find it cleaner to make statements whose assumptions posit the existence of a single oracle that can compute a handful of functions


\subsection{Computable structure theory}

In this subsection, we present the basic definitions from computable structure theory for metric structures \cite{FMcN} (building upon the work in \cite{CS1},\cite{CS2}, \cite{CS3}, \cite{CS4}, \cite{CS5}) in the context of tracial von Neumann algebras.

Throughout this subsection, $M$ denotes a separable tracial von Neumann algebra whose unit ball is denoted by $M_1$.  Given $x,y\in M_1$, a \textbf{rounded combination} of $x$ and $y$ is an element of the form $\lambda x+\mu y$, where $\lambda,\mu\in \mathbb C$ satisfy $|\lambda|+|\mu|\leq 1$.  The rounded combination will be called \textbf{rational} if $\lambda$ and $\mu$ belong to $\mathbb Q(i)$.   

\begin{defn}

\

\begin{enumerate}
\item Given $A\subseteq M_1$, we let $\langle A\rangle$ be the smallest subset of $M_1$ containing $A$ and closed under rational rounded combinations, multiplication, and adjoint.\footnote{In order to fit our discussion under the more general presentation found in \cite{FMcN}, we need our operations to be uniformly continuous, whence the need to restriction attention to operator norm bounded balls.}
\item We say that $A$ \textbf{generates} $M$ if $\langle A\rangle$ is 2-norm dense in $M_1$.
\item A \textbf{presentation} of $M$ is a pair $M^\#:=(M,(a_n)_{n\in \mathbb N})$, where $\{a_n \ : \ n\in \mathbb N\}\subseteq M_1$ generates $M$.  Elements of the sequence $(a_n)_{n\in \mathbb N}$ are referred to as \textbf{special points} of the presentation while elements of $\langle \{a_n \ : \ n\in \mathbb N\}\rangle$ are referred to as \textbf{rational points} of the presentation.
\end{enumerate}
\end{defn}


The following remark is crucial for what follows:

\begin{rmk}
Given a presentation $M^\#$ of $M$, it is possible to computably enumerate the rational points of $M^\#$.\footnote{Technically, one is computably enumerating ``codes'' for rational points.}  Consequently, it makes sense to consider algorithms which take rational points of $M^\#$ as inputs and/or outputs.
\end{rmk}

\begin{defn}
If $M^\#$ is a presentation of $M$ and $\d$ is an oracle, then $x\in M_1$ is a \textbf{$\d$-computable point of $M^\#$} if there is a $\d$-algorithm such that, upon input $k\in \mathbb N$, returns a rational point $p\in M^\#$ with $d(x,p)<2^{-k}$.
\end{defn}

\begin{defn}
If $M^\#$ is a presentation of $M$ and $\d$ is an oracle, then $M^\#$ is a \textbf{$\d$-computable presentation} if there is a $\d$-algorithm such that, upon input rational point $p\in M^\#$ and $k\in \mathbb N$, returns a rational number $q$ such that $|p\|_2-q|<2^{-k}$.
\end{defn}



\begin{defn}
Suppose that $M$ and $N$ are tracial von Neumann algebras with presentations $M^\#$ and $N^\dagger$ respectively.  Further suppose that $f:M^m\to N$ is a Lipshitz map (say with respect to the maximum metric on $M^m$)\footnote{The Lipshitz condition can be weakened to having a ``computable modulus of uniform continuity'' but we will not need this more general notion in this paper.} and $\d$ is an oracle.  Then $f$ is a \textbf{$\d$-computable map from $M^\#$ into $N^\dagger$} if there is a $\d$-algorithm such that, upon input a tuple of rational points $\vec p\in (M^\#)^m$ and $k\in \mathbb N$, returns a rational point $p'\in N^\dagger$ such that $d(f(\vec p),p')<2^{-k}$.  (Here, we use an efficient numbering of $\mathbb N^k$ to effectively enumerate the $m$-tuples of rational points of $M^\#$.)
\end{defn}


\subsection{Conditional expectations}

In this subsection, we make a few general observations about computability of conditional expectations.  First, we introduce a convenient piece of terminology:

\begin{defn}
Suppose that $N$ is a subfactor of $M$ and that $M^\#$ and $N^\dagger$ are presentations of $M$ and $N$ respectively.  For an oracle $\d$, we say that $(M^\#,N^\dagger)$ is a \textbf{$\d$-computable pair} if $M^\#$ is a $\d$-computable presentation of $M$ and the inclusion map $i:N^\dagger\to M^\#$ is a $\d$-computable map.
\end{defn}

\begin{rmk}
If $(M^\#,N^\dagger)$ is a $\d$-computable pair, then $N^\dagger$ is a $\d$-computable presentation of $N$.
\end{rmk}

\begin{lemma}
Suppose that $(M^\#,N^\dagger)$ is a $\d$-computable pair.  Then $E_N:M^\#\to N^\dagger$ is $\d$-computable if and only if there is a $\d$-algorithm which, upon input rational point $p\in M^\#$ and $k\in \mathbb N$, produces a rational number $q$ such that $|d(p,N)-q|<2^{-k}$.
\end{lemma}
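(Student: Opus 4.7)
The forward direction is essentially the triangle inequality: given input $(p, k)$, I would use the $\d$-algorithm for $E_N$ to produce a rational point $p' \in N^\dagger$ with $\|E_N(p) - p'\|_2 < 2^{-(k+1)}$, then invoke the $\d$-computability of the inclusion $i : N^\dagger \to M^\#$ and of the 2-norm on $M^\#$ to produce a rational $q$ with $|\|p - p'\|_2 - q| < 2^{-(k+1)}$. Since $d(p,N) = \|p - E_N(p)\|_2$, one then checks by the triangle inequality that $|d(p, N) - q| < 2^{-k}$.

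For the backward direction, the key ingredient is the Pythagorean identity, which holds because $E_N$ is the orthogonal projection of $L^2(M)$ onto $L^2(N)$: for every $y \in N$,
\[
\|p - y\|_2^2 = d(p, N)^2 + \|E_N(p) - y\|_2^2.
\]
Thus a rational point $y \in N^\dagger$ is close to $E_N(p)$ in 2-norm if and only if $\|p - y\|_2$ is close to $d(p, N)$, and the latter condition is effectively verifiable using the $\d$-computable pair structure together with the hypothesized algorithm for $d(\cdot, N)$.

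The algorithm, on input $(p, k)$, is then: set $\epsilon = 2^{-k}$ and compute via the hypothesized algorithm a rational $r$ with $|d(p, N) - r| < \epsilon^2/10$; then enumerate the rational points $q_n$ of $N^\dagger$, and for each, using that $(M^\#, N^\dagger)$ is a $\d$-computable pair, compute a rational $s_n$ with $|\|p - q_n\|_2 - s_n| < \epsilon^2/10$; halt and output $q_n$ as soon as $s_n^2 - r^2 < \epsilon^2/2$. A routine calculation with the Pythagorean identity then gives $\|E_N(p) - q_n\|_2 < \epsilon$. Termination is guaranteed: since $E_N$ is contractive in operator norm, $E_N(p) \in N_1$, and by definition of a presentation the rational points of $N^\dagger$ are 2-norm dense in $N_1$; hence by the Pythagorean identity any sufficiently good rational approximation to $E_N(p)$ triggers the halting threshold. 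The only subtlety is the bookkeeping of approximation tolerances, and I anticipate no conceptual obstacle beyond that.
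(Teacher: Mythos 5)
Your proof is correct and follows essentially the same route as the paper: the forward direction via the triangle inequality through a rational approximation of $E_N(p)$ pushed into $M^\#$, and the backward direction via the Pythagorean identity $\|p-y\|_2^2 = d(p,N)^2 + \|E_N(p)-y\|_2^2$ together with a search over rational points of $N^\dagger$, halting when $\|p-q_n\|_2^2$ is close enough to $d(p,N)^2$. Your specific tolerances ($\epsilon^2/10$ and threshold $\epsilon^2/2$) actually yield roughly $1.1\epsilon$ rather than $\epsilon$, but this is exactly the kind of bookkeeping you flagged and is fixed by shrinking the tolerances by a constant factor; the paper's own proof is no more careful on this point.
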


\begin{proof}
First suppose that $E_N:M^\#\to N^\dagger$ is $\d$-computable.  Fix a rational point $p\in M^\#$ and $k\in \mathbb N$.  Using $\d$, we can find a rational point $p'\in N^\dagger$ such that $d(E_N(p),p')<2^{-k-1}$.  Since $i:N^\dagger\to M^\#$ is $\d$-computable, using $\d$ we can find a rational point $p''\in M^\#$ such that $d(i(p'),p'')<2^{-k-1}$.  Since $$|d(p,N)-d(p,p'')|=|d(p,E_N(p))-d(p,p'')|\leq d(E_N(p),p')+d(i(p'),p'')<2^{-k},$$ and one can compute $d(p,p'')$ using $\d$, this $\d$-algorithm computes $d(p,N)$. 

We now prove the converse.  Suppose that $p\in M^\#$ is a rational point and $k\in \mathbb N$.  Set $l:=k-2$.  By the hypothesis, using $\d$ we can find a rational number $q$ such that $|d(p,N)-q|<2^{-l}$.  Now, using $\d$ again, start computing $d(p,i(p'))$ to within $2^{-l}$ for rational points $p'\in N^\#$.  Suppose that $p'\in N^\dagger$ is the first rational point for which there is a rational number $r$ such that $d(p,i(p'))$ is within $2^{-l}$ of $r$ and $(q-2^{-l},q+2^{-l})\cap (r-2^{-l},r+2^{-l})\not=\emptyset$.  Then since 
$$\|p-p'\|_2^2=\|(p-E_N(p))+(E_N(p)-p')\|_2^2=\|p-E_N(p)\|_2^2+\|E_N(p)-p'\|_2^2,$$ we have $d(E_N(p),p')^2=d(p,p')^2-d(p,E_N(p))^2\leq 6\cdot 2^{-l}+(3\cdot 2^{-l})^2<2^{-l+3}+2^{4-2l}<2^{-l+2}=2^{-k}$.  This algorithm thus computes $E_N$.
\end{proof}

We end this section with a brief comment about the finite index case.

\begin{prop}
Suppose that $N$ has finite index in $M$, that $(M^\#,N^\dagger)$ is a $\d$-computable pair, and that there is a Pimsner-Popa basis $m_1,\ldots,m_{n+1}$ for $M$ over $N$ (see \cite{pimsnerpopa}) such that each $m_i$ is a $\d$-computable point of $M^\#$ and each $E_N(m_j)$ is a $\d$-computable point of $N^\dagger$.  Then $E_N$ is $\d$-computable.
\end{prop}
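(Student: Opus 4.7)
The plan is to exploit the Pimsner-Popa reconstruction formula $x = \sum_{i=1}^{n+1} m_i E_N(m_i^* x)$ for $x \in M$, a standard consequence of the basis property \cite{pimsnerpopa}. Since each $E_N(m_i^* x) \in N$ and $E_N$ is a right $N$-module map, one has
\[
E_N\!\left(\sum_{i=1}^{n+1} m_i n_i\right) = \sum_{i=1}^{n+1} E_N(m_i)\,n_i \quad \text{for any } n_1,\ldots,n_{n+1} \in N.
\]
Combined with contractivity of $E_N$ in the $2$-norm, this yields the key approximation principle: if $n_1,\ldots,n_{n+1} \in N$ satisfy $\|x - \sum_i m_i n_i\|_2 < \epsilon$, then $\|E_N(x) - \sum_i E_N(m_i) n_i\|_2 \leq \epsilon$. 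Thus to approximate $E_N(p)$ it will suffice to find such an $(n_i)$-tuple and then approximate $\sum_i E_N(m_i) n_i$.

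Given this, the $\d$-algorithm for $E_N$ runs as follows. For input rational $p \in M^\#$ and $k \in \mathbb{N}$, enumerate $(n+1)$-tuples $(q_1,\ldots,q_{n+1})$ of rational points of $N^\dagger$ and, using the $\d$-computability of each $m_i$ in $M^\#$, of the inclusion $i:N^\dagger\to M^\#$, and of the $2$-norm on $M^\#$, approximate $\|p - \sum_i m_i\cdot i(q_i)\|_2$ to arbitrary precision; halt at the first tuple for which the approximation certifies $\|p - \sum_i m_i\cdot i(q_i)\|_2 < 2^{-k-1}$. Such a tuple is found in finite time: each $E_N(m_i^* p)$ lies in $N_1$ (since $\|m_i\|_\infty,\|p\|_\infty\leq 1$ and $E_N$ is an operator-norm contraction) and therefore admits rational-point approximants in $N^\dagger$ in the $2$-norm of arbitrary precision; choosing $q_i$ within $2^{-k-2}/(n+1)$ of $E_N(m_i^* p)$ in $2$-norm gives $\|p - \sum_i m_i \cdot i(q_i)\|_2 < 2^{-k-2}$ via the triangle inequality and $\|m_i\|_\infty\leq 1$.

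Once such a tuple $(q_i)$ is located, approximate each $E_N(m_j)$ by a rational point $r_j \in N^\dagger$ within $2^{-k-2}/(n+1)$ in $2$-norm (possible by hypothesis) and output a rational point $p' \in N^\dagger$ within $2^{-k-2}$ in $2$-norm of $\sum_j r_j q_j$ (which exists because $\sum_j r_j q_j$ is $2$-norm close to $E_N(p) \in N_1$, and rational points of $N^\dagger$ are $2$-norm dense in $N_1$). Combining the approximation principle with the standard inequality $\|ab\|_2 \leq \|a\|_2 \|b\|_\infty$ gives $\|E_N(p) - p'\|_2 < 2^{-k-1} + 2^{-k-2} + 2^{-k-2} = 2^{-k}$, as required. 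No genuine obstacle arises beyond careful bookkeeping of the rational-point approximations and the error budget; all effective ingredients are furnished directly by the hypotheses together with the Pimsner-Popa reconstruction formula.
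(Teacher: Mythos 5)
Your proposal is correct and follows essentially the same route as the paper: search over tuples of rational points of $N^\dagger$ until $\sum_j m_j q_j$ is certifiably $2$-norm close to the input (termination being guaranteed by the Pimsner--Popa reconstruction formula), then use the $N$-module property and $2$-norm contractivity of $E_N$ to conclude that $\sum_j E_N(m_j)q_j$ approximates $E_N(p)$, and compute the latter via the $\d$-computability of the points $E_N(m_j)$. Your write-up just makes the error bookkeeping and the termination argument more explicit than the paper does.
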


\begin{proof}
Given a rational point $p\in M^\#$ and $k\in \mathbb N$, one first computes $\sum m_jp_j$ (as $p_j$ range over the presentation of $N$) and waits until it is within $2^{-k}$ of $x$.  Then since $E_N$ is 2-norm contractive, $E_N(x)$ is with $2^{-k}$ of $\sum E_N(m_j)p_j$, which we can also compute using $\mathbf{d}$.
\end{proof}

\begin{question}
If $(M^\#,N^\dagger)$ is a $\d$-computable pair of finite index, must a Pimsner-Popa basis as above always exist?
\end{question}

\section{w-spectral gap}

Throughout this section, $M$ is a II$_1$ factor and $N$ is a subfactor of $M$.  We remind the reader of the definition of w-spectral gap.

\begin{defn}
$N$ has \textbf{w-spectral gap in $M$} if, for any $\epsilon>0$, there is a finite $F\subseteq N$ and $\delta>0$ such that, for all $x\in M_1$, we have:  if $\max_{y\in F}\|[x,y]\|_2<\delta$, then there is $x'\in N'\cap M$ such that $d(x,x')<\epsilon$.
\end{defn}

The reader is invited to consult \cite{Gold} for more information about w-spectral gap subfactors and their model-theoretic significance.  In order to bring this notion into our computability-theoretic setting, we need the following definition:

\begin{defn}
Suppose that $N$ is a w-spectral gap subfactor of $M$ and that $M^\#$ and $N^\dagger=(N;(a_n)_{n\in \mathbb N})$ are presentations of $M$ and $N$ respectively.  We say that $f:\mathbb N\to \mathbb N$ is a \textbf{spectral gap function for $(M^\#,N^\dagger)$} if:  for any $n\in \mathbb N$ and rational point $p\in M^\#$, if $\max_{1\leq i\leq f(n)}\|[p,a_i]\|_2<2^{-f(n)}$, then $d(p,N'\cap M)<2^{-n}$.
\end{defn}

\begin{rmk}
If $N$ has w-spectral gap in $M$, then for any presentations $M^\#$ and $N^\dagger$ of $M$ and $N$ respectively, there is a spectral gap function $f$ for $(M^\#,N^\dagger)$.
\end{rmk}

In order to obtain our results about property (T) subfactors as a special case of our main result on w-spectral gap subfactors, we need to remind the reader of the following fact of Connes and Jones \cite[Proposition 1]{CJ}:

\begin{fact}\label{kazhdan}
Suppose that $N$ is a II$_1$ factor with property (T).  Then there is $\epsilon>0$, finite $F\subseteq M$, and $K>0$ such that, for any $\delta\leq \epsilon$, any $N$-$N$ bimodule $H$, and any unit vector $\xi\in H$, if $\|y\xi-\xi y\|<\delta$ for all $y\in F$, then there is a central vector $\eta\in H$ such that $\|\eta-\xi\|<K\delta$.  
\end{fact}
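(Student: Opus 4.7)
The plan is to derive this linear (Kazhdan-type) rate from the qualitative $\epsilon$-$\delta$ definition of property (T) by passing through a spectral-gap statement on the orthogonal complement of the central vectors. Throughout, write $Z(H)$ for the closed subspace of vectors $\eta \in H$ with $y\eta = \eta y$ for all $y \in N$, and $H_0 := Z(H)^\perp$. A routine check shows that $Z(H)$ is an $N$-$N$ sub-bimodule (central vectors are sent to central vectors by left or right multiplication by elements of the center, and more importantly the subspace is preserved under the $N$-$N$ action because the trace pairing makes the central projection commute with the action), hence so is $H_0$.

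The first main step is to extract a spectral-gap inequality on $H_0$. Apply the definition of property (T) with, say, the fixed threshold $\epsilon_0 = 1/2$: there is a finite $F \subseteq N$ and some $\delta_0 > 0$ such that for every $N$-$N$ bimodule $H'$ and every unit vector $\zeta \in H'$, if $\max_{y \in F} \|y\zeta - \zeta y\| < \delta_0$ then $\zeta$ is within $1/2$ of some central vector. Apply this to unit vectors $\zeta \in H_0$. Since the only central vector in $H_0$ is $0$, and $\|\zeta - 0\| = 1 > 1/2$, the hypothesis must fail, giving $\max_{y \in F} \|y\zeta - \zeta y\| \geq \delta_0$. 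Homogeneity then yields the spectral-gap inequality $\|\zeta\| \leq K \max_{y \in F} \|y\zeta - \zeta y\|$ for every $\zeta \in H_0$, with $K := 1/\delta_0$.

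The second step is to reduce the general statement to this inequality. Given any $N$-$N$ bimodule $H$ and any unit vector $\xi \in H$, decompose $\xi = \eta + \xi^\perp$ with $\eta \in Z(H)$ and $\xi^\perp \in H_0$. Because $\eta$ is central, for every $y \in N$ we have $y\eta = \eta y$, so $y\xi^\perp - \xi^\perp y = y\xi - \xi y$. Thus if $\delta \leq \epsilon := \delta_0$ and $\|y\xi - \xi y\| < \delta$ for every $y \in F$, applying the spectral gap of Step 1 to $\xi^\perp$ (or noting the bound trivially if $\xi^\perp = 0$) gives $\|\xi - \eta\| = \|\xi^\perp\| \leq K\delta$, and $\eta$ is central as required.

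The only conceptual step is the first one, and it is essentially a compactness/contradiction argument that costs nothing beyond the definition of property (T) itself; the main thing to verify carefully is that $H_0$ really is an $N$-$N$ sub-bimodule, so that property (T) may legitimately be applied to unit vectors inside it. Everything after that is algebraic bookkeeping.
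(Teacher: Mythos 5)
The step that fails is exactly the one you flag as the main thing to verify: $Z(H)$ is \emph{not} an $N$-$N$ sub-bimodule, and neither is $H_0=Z(H)^\perp$. Take $H=L^2(N)$ with its standard bimodule structure: since $N$ is a factor, $Z(H)=\mathbb C\hat 1$, which is obviously not invariant under left multiplication by $N$, and the orthogonal projection onto $Z(H)$, namely $x\mapsto \tau(x)\hat 1$, does not commute with the left (or right) action --- so your parenthetical justification is false. What is true is that central vectors are the fixed points of the unitaries $\xi\mapsto u\xi u^*$, $u\in U(N)$, so $Z(H)^\perp$ is invariant under this conjugation action, but not under the two one-sided actions separately. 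Hence you cannot ``apply property (T) to unit vectors inside $H_0$'' as if $H_0$ were an $N$-$N$ bimodule whose only central vector is $0$; Step 1 as written is unjustified.

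Under the definition of property (T) you state (for every $\epsilon$ there are finite $F$ and $\delta>0$ such that any $(F,\delta)$-central unit vector in any $N$-$N$ bimodule lies within $\epsilon$ of a central vector), the gap is easy to close without any sub-bimodule claim: apply that definition with $\epsilon_0=1/2$ to the \emph{ambient} bimodule $H$ and a unit vector $\zeta\perp Z(H)$; since every $\eta\in Z(H)$ satisfies $\|\zeta-\eta\|^2=1+\|\eta\|^2\geq 1$, no central vector is within $1/2$ of $\zeta$, so $\max_{y\in F}\|y\zeta-\zeta y\|\geq\delta_0$, and your Step 2 then goes through verbatim with $K=1/\delta_0$ (indeed with no restriction $\delta\leq\epsilon$). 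Be aware, though, that the paper quotes this Fact from \cite[Proposition 1]{CJ}, where property (T) is defined by the weaker requirement that a bimodule containing an almost central unit vector merely contain \emph{some} nonzero central vector; starting from that definition, the orthogonal-complement argument genuinely cannot be run (for precisely the reason above --- this is where the bimodule setting differs from the group setting, where $(H^G)^\perp$ is a subrepresentation), and upgrading ``there exists a nonzero central vector'' to ``there is a central vector close to $\xi$, with linear rate'' is the actual content of the Connes--Jones proposition. So your argument proves the Fact only if the stronger ``close to a central vector'' formulation is taken as the definition; state that hypothesis explicitly, in which case the remaining content is the fixed Kazhdan set and linear rate, which your fixed-threshold-plus-homogeneity argument (after the repair above) does deliver.
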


We refer to the finite set $F$ in the previous fact as a \textbf{Kazhdan set} for $N$ and the pair $(F,K)$ as a \textbf{Kazhdan pair} for $N$ (in analogy with the corresponding terminology for groups).  We call a presentation $N^\dagger$ of a property (T) factor $N$ a \textbf{Kazhdan presentation} if there is a Kazhdan set for $N$ amongst the rational points of $N^\dagger$.

Before stating the main computability-theoretic fact about Kazhdan presentations of property (T) factors, we state one easy lemma, whose proof we leave to the reader.

\begin{lemma}
There is a computable function $j:\mathbb N^2\to \mathbb N$ such that, for any tracial von Neumann algebra $N$, any presentation $N^\dagger$ of $N$, and any tracial von Neumann algebra $M$ containing $N$, if $p$ is the $m^{\text{th}}$ rational point of $N^\dagger$ and $x\in M_1$ is such that $\max_{1\leq i \leq j(m,k)}\|[x,a_i]\|_2<2^{-j(m,k)}$, then $\|[x,p]\|_2<2^{-k}$.
\end{lemma}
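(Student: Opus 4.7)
The plan is to proceed by structural induction on the construction of $r_m$ out of the special points $(a_i)_i$ via rational rounded combinations, multiplication, and adjoint. Given the code $m$, the finite syntactic expression for $r_m$ is computably recoverable; from it I extract two quantities: $s(m)$, the total number of operations in the expression, and $I(m)$, the maximum index of a special point appearing as a leaf. Both are computable in $m$.

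For the induction I adopt the standard convention that the sequence of special points is closed under $*$; if it is not, one replaces $(a_n)$ with the interleaved sequence $(a_1, a_1^*, a_2, a_2^*, \ldots)$, affecting $I(m)$ and $s(m)$ only by easily computable factors. Under this convention, every rational point admits a normal-form expression built solely from rational rounded combinations and multiplications, since adjoints can be pushed down to the leaves via $(uv)^* = v^*u^*$, $(\lambda u + \mu v)^* = \bar\lambda u^* + \bar\mu v^*$, and $(u^*)^* = u$. The standard commutator identities, together with the fact that all rational points and special points lie in $M_1$ (hence have operator norm at most $1$), yield the pointwise estimates
\[
\|[x, \lambda u + \mu v]\|_2 \leq |\lambda|\|[x,u]\|_2 + |\mu|\|[x,v]\|_2 \leq \max(\|[x,u]\|_2, \|[x,v]\|_2) \quad (|\lambda|+|\mu|\leq 1),
\]
\[
\|[x, uv]\|_2 \leq \|u\|_\infty\|[x,v]\|_2 + \|v\|_\infty\|[x,u]\|_2 \leq 2\max(\|[x,u]\|_2, \|[x,v]\|_2).
\]
A straightforward structural induction then gives
\[
\|[x, r_m]\|_2 \leq 2^{s(m)} \cdot \max_{1 \leq i \leq I(m)} \|[x, a_i]\|_2.
\]

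Setting $j(m,k) := \max(I(m), k + s(m))$, which is computable in $(m,k)$, the hypothesis $\max_{1\leq i\leq j(m,k)}\|[x,a_i]\|_2 < 2^{-j(m,k)}$ forces $\max_{i \leq I(m)}\|[x,a_i]\|_2 < 2^{-k-s(m)}$, whence $\|[x, r_m]\|_2 \leq 2^{s(m)}\cdot 2^{-k-s(m)} = 2^{-k}$, as required. The one genuinely substantive point — and the main obstacle if one proceeds naively — is the treatment of adjoints, which would otherwise force control of $\|[x, a_i^*]\|_2 = \|[x^*, a_i]\|_2$ not directly supplied by the hypothesis. The initial $*$-closure assumption on the sequence is exactly what lets the induction avoid passing from $x$ to $x^*$, reducing every adjoint operation in the formula to the choice of an already-indexed leaf.
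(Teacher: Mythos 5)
The paper offers no argument to compare against here --- the lemma is stated with its proof left to the reader --- and your structural induction (push adjoints to the leaves, bound a rational rounded combination of unit-ball elements by the maximum of the two commutator norms, bound a product by twice that maximum, then take $j(m,k)=\max(I(m),k+s(m))$) is exactly the routine argument the author presumably intends; the quantitative bookkeeping, computability of $s(m)$, $I(m)$, and hence of $j$, are all fine.

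The genuine problem is the step you label a ``standard convention.'' Closing the special-point sequence under adjoints by interleaving $(a_1,a_1^*,a_2,a_2^*,\ldots)$ is not a without-loss-of-generality reduction for the lemma as stated, because the lemma's hypothesis bounds only $\|[x,a_i]\|_2$ for the special points of the \emph{given} presentation $N^\dagger$, and such bounds give no control on $\|[x,a_i^*]\|_2=\|[x^*,a_i]\|_2$: in $M=N=M_2(\mathbb C)$ with normalized trace, taking $x=a=e_{12}$ gives $[x,a]=0$ while $\|[x,a^*]\|_2=\|e_{11}-e_{22}\|_2=1$. Since $\langle\{e_{12}\}\rangle$ is closed under adjoints, the constant sequence $a_i=e_{12}$ is a legitimate presentation of $M_2(\mathbb C)$, and with $p=a_1^*$ the hypothesis of the lemma holds with value $0$ for every candidate $j(m,k)$ while the conclusion fails; so no choice of $j$ can prove the statement for presentations that are not $*$-closed, and your interleaving trick silently replaces the hypothesis by one quantifying over a different (larger) set of commutators. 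In other words, you have correctly located the only substantive point, but the fix is a correction to the statement (or to the definition of presentation), not a convention: either build adjoint-closure of the special points into the notion of presentation, or strengthen the hypothesis to $\max_{1\le i\le j(m,k)}\max\bigl(\|[x,a_i]\|_2,\|[x,a_i^*]\|_2\bigr)<2^{-j(m,k)}$. Whichever repair you choose, say so explicitly, and note that the same adjustment must be carried through the places where the lemma is invoked (the definition of a spectral gap function and the corollary on Kazhdan presentations), whose hypotheses likewise mention only commutators with the $a_i$.
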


\begin{cor}
Suppose that $N$ is a property (T) factor and $N^\dagger$ is a Kazhdan presentation of $N$.  Then for any II$_1$ factor containing $M$ and any presentation $M^\#$ of $M$, there is a computable spectral gap function for $(M^\#,N^\dagger)$.
\end{cor}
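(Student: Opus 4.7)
The approach is to convert the Kazhdan pair data for $N$ into a linear commutator-to-distance estimate, and then translate that estimate into the numerical format required by a spectral gap function, using the preceding lemma to reduce commutation checks with arbitrary rational points to commutation checks with special points.

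First, I would apply Fact \ref{kazhdan} to $N$ to obtain constants $\epsilon_0, K > 0$ and a Kazhdan set $F$. Because $N^\dagger$ is a Kazhdan presentation, $F$ can be chosen as a finite list $\{p_1, \ldots, p_r\}$ of rational points of $N^\dagger$; let $m_1, \ldots, m_r$ denote the indices of these points in the canonical enumeration of rational points of $N^\dagger$.

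The conceptual heart of the proof is to upgrade Fact \ref{kazhdan} to the element-level linear estimate
\[ d(x, N'\cap M) \leq C \cdot \max_{1\leq i \leq r} \|[p_i, x]\|_2, \qquad x \in M_1, \]
where $C := \max(K, 1/\epsilon_0)$. To prove this, specialize Fact \ref{kazhdan} to the standard $N$-$N$ bimodule $H := L^2(M)$ and set $x_0 := x - E_{N'\cap M}(x)$. Since $E_{N'\cap M}(x)$ commutes with every element of $N$, we have $[p_i, x_0] = [p_i, x]$; and by definition $x_0$ is orthogonal in $H$ to the subspace $L^2(N'\cap M)$ of $N$-central vectors. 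Assuming $x_0 \neq 0$, put $\xi := x_0/\|x_0\|_2$. If $\max_i \|[p_i, x]\|_2/\|x_0\|_2 \leq \epsilon_0$, Fact \ref{kazhdan} produces an $N$-central vector $\eta \in H$ with $\|\eta - \xi\| < K \max_i \|[p_i, x]\|_2/\|x_0\|_2$; since $\xi$ is orthogonal to the central vectors, $\|\eta - \xi\| \geq \|\xi\| = 1$, forcing $\|x_0\|_2 < K \max_i \|[p_i, x]\|_2$. In the opposite regime, the failure of that inequality yields $\|x_0\|_2 < \max_i \|[p_i, x]\|_2/\epsilon_0$ directly. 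In either case, $d(x, N'\cap M) = \|x_0\|_2 < C \max_i \|[p_i, x]\|_2$.

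Then I would construct $f$ as follows. Let $j: \mathbb N^2 \to \mathbb N$ be the computable function supplied by the preceding lemma, fix a rational upper bound $\tilde C \geq C$ hardcoded into the algorithm, and set $k(n) := n + \lceil \log_2 \tilde C \rceil$. Define $f(n) := \max_{1 \leq i \leq r} j(m_i, k(n))$; both $k(n)$ and $f(n)$ are computable in $n$. For verification: if $p$ is a rational point of $M^\#$ with $\max_{1 \leq i \leq f(n)} \|[p, a_i]\|_2 < 2^{-f(n)}$, then because $f(n) \geq j(m_i, k(n))$ for each $i$, the preceding lemma yields $\|[p, p_i]\|_2 < 2^{-k(n)}$; the linear estimate then gives $d(p, N'\cap M) < C \cdot 2^{-k(n)} \leq 2^{-n}$. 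The main obstacle is precisely the upgrade step: combining Fact \ref{kazhdan} with the orthogonality of $x - E_{N'\cap M}(x)$ to the central vectors in $L^2(M)$ to obtain a linear, rather than merely continuous, dependence of the distance on the commutator norms. Once that estimate is in hand, everything else is mechanical bookkeeping with hardcoded rational approximations of the Kazhdan constants $K$ and $1/\epsilon_0$.
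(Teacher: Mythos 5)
Your proposal is correct and follows essentially the same route as the paper: fix a Kazhdan pair whose Kazhdan set sits among the rational points of $N^\dagger$, use the computable function $j$ from the preceding lemma to convert commutator bounds against the special points into commutator bounds against the Kazhdan set, and define $f(n)$ as a maximum of values $j(\cdot, n+\text{constant})$ with the constant absorbing the Kazhdan constant. Your ``upgrade step'' (passing to $x-E_{N'\cap M}(x)$, normalizing, and using orthogonality to the central vectors in $L^2(M)$ to get the linear estimate $d(x,N'\cap M)\leq \max(K,1/\epsilon_0)\max_i\|[p_i,x]\|_2$) is just a careful spelling-out of the application of Fact \ref{kazhdan} that the paper's proof leaves implicit.
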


\begin{proof}
Take a finite subset $F$ of $M_1$ and $p\in \mathbb N$ is such that $(F,2^p)$ is a Kazhdair pair for $N$ as witnessed by $\epsilon:=2^{-p}$ as in the statement of Fact \ref{kazhdan} and for which there is $m\in \mathbb N$ such that $F$ is contained amongst the first $m$ rational points of $N^\dagger$.  Set $$f(n):=\max_{1\leq i\leq m}j(i,n+p).$$ Note that $f$ is a computable function.  We claim that $f$ is a spectral gap function for $(M^\#,N^\dagger)$.  Indeed, suppose that $b$ is a rational point of $M^\#$ and $\max_{1\leq i\leq f(n)}\|[b,a_i]\|_2<2^{-f(n)}$.  Then by the definition of $j$, we have $\max_{p\in F}\|[b,p]\|_2<2^{-n-p}$, whence by Fact \ref{kazhdan} above, there is $p'\in N'\cap M$ such that $d(p,p')<2^{-n}$, as desired.
\end{proof}

\begin{rmk}
In the previous corollary, one can remove the assumption that the presentation is a Kazhdan presentation at the cost of concluding that the spectral gap function is merely $\d$-computable, where $\d$ is some oracle for which each member of some Kazhdan set for $N$ is a $\d$-computable point.
\end{rmk}

%


\section{Existentially closed II$_1$ factors}

In this brief section, we remind the reader of the definition of existentially closed II$_1$ factor and mention a few remarks about them.

\begin{defn}
A II$_1$ factor $M$ is \textbf{existentially closed} (or e.c. for short) if:  whenever $P$ is a II$_1$ factor such that $M\subseteq P$, then there is an ultrafilter $\mathcal U$ and an embedding $i:P\hookrightarrow M^\mathcal U$ such that the restriction $i|M$ is the diagonal embedding of $M$ in $M^\mathcal U$.
\end{defn}

The class of e.c. II$_1$ factors is an incredibly rich family of factors (see \cite{ecfactor} for more details).  Every II$_1$ factor embeds into an e.c. factor (of the same density character).  The hyperfinite II$_1$ factor $\mathcal R$ is an e.c. factor if and only if the Connes Embedding Problem has a positive solution.  w-spectral gap subfactors of e.c. factors were studied in \cite{Gold}, where it was shown, in particular, that if $N$ is a w-spectral gap subfactor of the e.c. factor $M$, then $(N'\cap M)'\cap M=N$.

We note that w-spectral gap subfactors of e.c. factors have infinite index, whence do not fall into the discussion from Section 2 above.  Indeed, since w-spectral gap subfactors never have property Gamma\footnote{This is probably well-known, but here is a proof communicated to us by Adrian Ioana:  Suppose that $N$ is a w-spectral gap subfactor of $M$.  Fix $\epsilon>0$ and take a finite set $F=F(\epsilon)$ witnessing w-spectral gap.  Suppose that $u\in U(N)$ $\epsilon$-commutes with $F$.  Then there is $x\in N'\cap M$ such that $d(u,x)\leq \epsilon$.  Since $E_N(x)\in Z(N)=\mathbb C\cdot 1$, it follows that $d(u,\mathbb C)\leq \epsilon$.  If $\epsilon$ is sufficiently small, this prevents $\operatorname{tr}(u)=0$.} while e.c. factors always have property Gamma (they are in fact McDuff \cite[page 3]{nomodcomp}), it follows that w-spectral gap subfactors of e.c. factors always have infinite index by \cite[Proposition 1.11]{pimsnerpopa}.

%
%
%

\section{Proof of the Main Result}

In this section, we prove the main result announced in the introduction.  First, given $m\in \mathbb N$, a II$_1$ factor $M$, and $u,a_1,\ldots,a_n,b\in M$, we set

$$\psi^M_{r,m}(u,\vec a,b):=\max\left(\|uu^*-1\|_2,\max_{1\leq i\leq m}\|[u,a_i]\|_2,2r\dotminus\|[u,y]\|_2\right).$$

Here, $\dotminus$ denotes truncated subtraction, that is, $r\dotminus s:=\max(r-s,0)$.  We also set 
$$\varphi^M_{r,m}(\vec a,b):=\inf_{u\in M_1}\psi^M_{r,m}(u,\vec a,b).$$

%

In the next lemma (which follows almost immediately from the definitions), we view $\mathbb C$ as a tracial von Neumann algebra in the obvious way and consider its ``standard'' presentation, that is, with presentation $\mathbb Q(i)$ (enumerated in some computable fashion).

\begin{lemma}\label{cancalculate}
Suppose that $M^\#$ is $\d$-computable.  Then for each rational number $r$ and each $m\in \mathbb N$, $\psi^M_{r,m}:(M^\#)^{m+2}\to \mathbb C$ is $\d$-computable.
\end{lemma}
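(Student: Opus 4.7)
The plan is to show that each of the three quantities inside the $\max$ defining $\psi^M_{r,m}$ is separately $\d$-computable as a function of rational-point inputs, and then to assemble them using the fact that truncated subtraction by a rational constant and taking the max of finitely many reals both preserve $2^{-k}$-approximability (being $1$-Lipschitz in each argument). Given rational-point inputs $u,a_1,\dots,a_m,b\in M^\#$ and a target precision $2^{-k}$, the algorithm will compute each piece to within $2^{-k-2}$ and output the max of those rational approximations.

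First I would handle the commutator terms. Since the class of rational points is closed under multiplication, both $ua_i$ and $a_iu$ are rational points of $M^\#$. Then $\tfrac12[u,a_i]=\tfrac12(ua_i)+(-\tfrac12)(a_iu)$ is a rational rounded combination (as $|\tfrac12|+|-\tfrac12|=1$), hence itself a rational point of $M^\#$. By the $\d$-computability of $M^\#$, its $2$-norm is $\d$-computable to within any $2^{-k}$, and so $\|[u,a_i]\|_2=2\|\tfrac12[u,a_i]\|_2$ is as well. The same argument applies to $\|[u,b]\|_2$, and $2r\dotminus\|[u,b]\|_2$ then inherits $\d$-computability from continuity of $\dotminus$.

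Next I would address $\|uu^*-1\|_2$, which is the one delicate case because the presentation need not contain $1$ among its rational points. The trick will be to expand via polarization: since $\tr(uu^*)=\|u\|_2^2\in\mathbb{R}$,
$$\|uu^*-1\|_2^2=\|uu^*\|_2^2-2\|u\|_2^2+1.$$
Both $u$ and $uu^*$ are rational points, so $\|u\|_2$ and $\|uu^*\|_2$ are $\d$-computable; plugging rational approximations into the right-hand side and taking a square root (continuous on $[0,\infty)$, with an explicit modulus) gives a $\d$-algorithm approximating $\|uu^*-1\|_2$ to any $2^{-k}$.

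With the three pieces in hand, I would finish by noting that $\max$ is $1$-Lipschitz in each coordinate, so the rational max of the three $2^{-k-2}$-approximations lies within $2^{-k}$ of $\psi^M_{r,m}(u,\vec a,b)$. The only real obstacle is the absence of $1$ from the rational points, and the polarization expansion above bypasses it cleanly; the rest is a routine unpacking of the definitions of $\d$-computable presentation and $\d$-computable map.
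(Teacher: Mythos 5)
Your proof is correct and is essentially the argument the paper intends: the paper states that the lemma ``follows almost immediately from the definitions'' and gives no proof, and your write-up supplies exactly that routine verification (closure of rational points under products and rational rounded combinations, then Lipschitz continuity of $\dotminus$ and $\max$). Your polarization identity $\|uu^*-1\|_2^2=\|uu^*\|_2^2-2\|u\|_2^2+1$ neatly handles the one genuinely delicate point, namely that $1$ need not occur among the rational points of $M^\#$.
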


The following lemma follows from a standard functional calculus argument:

\begin{lemma}
There is a nondecreasing computable function $g:\mathbb N\to \mathbb N$ such that, for all $k\in \mathbb N$, all tracial von Neumann algebras $M$, and all $u\in M_1$, if $\|uu^*-1\|_2<2^{-g(k)}$, then there is a unitary $u'\in M$ such that $d(u,u')<2^{-k}$.
\end{lemma}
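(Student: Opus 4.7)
The plan is to use the polar decomposition of $u$ together with the finiteness of $M$ to construct the unitary $u'$.

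First I would observe that the hypothesis on $uu^*$ transfers to $u^*u$: because the trace is tracial we have $\|uu^*-1\|_2 = \|u^*u - 1\|_2$ (expand both squares and use $\tau(uu^*uu^*)=\tau(u^*uu^*u)$ together with $\tau(uu^*)=\tau(u^*u)$). So setting $\delta := 2^{-g(k)}$, I may assume $\|u^*u-1\|_2 < \delta$.

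Next, I would take the polar decomposition $u = v|u|$, where $|u|=(u^*u)^{1/2}$ and $v$ is the partial isometry with initial projection $v^*v = \chi_{(0,\infty)}(u^*u)$ and final projection $vv^* = \chi_{(0,\infty)}(uu^*)$. Two standard estimates now do the work. On the one hand, since $|\sqrt t - 1|\leq |t-1|$ on $[0,1]$ and $v$ is a contraction on the 2-norm, one gets
\[
\|u - v\|_2 = \|v(|u|-1)\|_2 \leq \bigl\||u|-1\bigr\|_2 \leq \|u^*u - 1\|_2 < \delta.
\]
On the other hand, the spectral projection $q:=1-v^*v = \chi_{\{0\}}(u^*u)$ satisfies $\tau(q) \leq \|u^*u-1\|_2^2 < \delta^2$, because $(1-t)^2 \geq 1$ at $t=0$.

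Now I would use that $M$ is finite (tracial) to enlarge $v$ to a unitary: since $\tau(1-v^*v)=\tau(1-vv^*)$, the projections $1-v^*v$ and $1-vv^*$ are Murray-von Neumann equivalent in $M$, so there is a partial isometry $w\in M$ with $w^*w = 1-v^*v$ and $ww^* = 1-vv^*$. Set $u' := v + w$. A short check (using orthogonality of the relevant projections) gives $(u')^*u' = u'(u')^* = 1$, so $u'$ is a unitary in $M$. Finally,
\[
\|u - u'\|_2 \leq \|u - v\|_2 + \|w\|_2 \leq \delta + \sqrt{\tau(1-v^*v)} < 2\delta.
\]
Taking $g(k) := k+1$ (clearly computable and nondecreasing) gives $\|u-u'\|_2 < 2^{-k}$, as required.

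There is essentially no genuine obstacle here; the only thing to be careful about is that the mass of the spectral measure of $u^*u$ at $0$ (which is what forces the defect $w$ between $v$ and a unitary) is quantitatively controlled by $\|u^*u-1\|_2$, which is exactly where the $(1-t)^2\geq 1$ observation comes in. The rest is routine functional calculus and the standard Murray-von Neumann equivalence trick in finite factors.
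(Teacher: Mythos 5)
Your argument is correct and is essentially the ``standard functional calculus argument'' that the paper invokes without writing out: pass from $uu^*$ to $u^*u$, take the polar decomposition $u=v|u|$, control both $\||u|-1\|_2$ and the trace of the defect projection by $\|u^*u-1\|_2$ via the pointwise inequalities $|\sqrt t-1|\leq|t-1|$ and $\chi_{\{0\}}(t)\leq(1-t)^2$ on $[0,1]$, and then extend $v$ to a unitary using finiteness. One small repair: the lemma is stated for arbitrary tracial von Neumann algebras, not just factors, and in a non-factor equality of traces does \emph{not} by itself give Murray--von Neumann equivalence of $1-v^*v$ and $1-vv^*$ (e.g.\ two inequivalent projections of trace $\tfrac12$ in $\mathbb C\oplus\mathbb C$). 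Instead, invoke the standard fact that in a finite von Neumann algebra equivalent projections have equivalent complements: since $v^*v\sim vv^*$ via $v$ itself, this yields the partial isometry $w$ you need, and your estimate $\|w\|_2^2=\tau(1-v^*v)<\delta^2$ together with $\|u-v\|_2<\delta$ gives $\|u-u'\|_2<2\delta$, so $g(k)=k+1$ works exactly as you say.
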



\begin{lemma}\label{helpful}
Suppose that $f:\mathbb N\to \mathbb N$ is a spectral gap function for $(M,N^\#)$.  Set $f':\mathbb N\to \mathbb N$ to be $f'(n):=f(g(n+2)+1)$.  Then for any $n$ and $r>0$, if $\varphi^M_{r,f'(n)}(x_1,\ldots,x_{f'(n)},b)<2^{-f'(n)}$, then $d(b,N)\geq r-2^{-n}$.
\end{lemma}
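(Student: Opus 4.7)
The plan is to unpack the infimum in $\varphi^M_{r,f'(n)}$ to obtain a near-witness $u \in M_1$, then combine the functional calculus lemma and the spectral gap function to produce an element $v \in N' \cap M$ with $\|v\|_\infty \le 1$ that is 2-norm close to $u$. Since $v$ commutes with every element of $N$, we will have $\|[v,b]\|_2 \le 2 d(b,N)$; comparing this with the lower bound $\|[u,b]\|_2 > 2r - 2^{-f'(n)}$ supplied by the hypothesis will force $d(b,N) \ge r - 2^{-n}$.

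Concretely, from $\varphi^M_{r,f'(n)}(x_1,\ldots,x_{f'(n)},b) < 2^{-f'(n)}$ I would extract $u \in M_1$ satisfying $\|uu^*-1\|_2 < 2^{-f'(n)}$, $\max_{1 \le i \le f'(n)} \|[u,a_i]\|_2 < 2^{-f'(n)}$ (reading the $x_i$ as the first $f'(n)$ special points of $N^\dagger$), and $\|[u,b]\|_2 > 2r - 2^{-f'(n)}$ (using $\|b\|_\infty \le 1$, as $b$ is a rational point of $M^\#$). Arranging $f(m) \ge m$ without loss of generality gives $f'(n) \ge g(n+2)+1$, so the functional calculus lemma produces a unitary $u' \in M$ with $\|u-u'\|_2 < 2^{-(n+2)}$, and the spectral gap function applied with parameter $n^* := g(n+2)+1$ (noting $f(n^*) = f'(n)$, so the hypothesis supplies exactly the required commutator bound) yields $d(u, N' \cap M) \le 2^{-n^*}$. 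Setting $v := E_{N' \cap M}(u')$, operator-norm contractivity of the conditional expectation gives $\|v\|_\infty \le \|u'\|_\infty = 1$, and the triangle inequality yields $\|u-v\|_2 \le \|u-u'\|_2 + d(u', N' \cap M) < 2\cdot 2^{-(n+2)} + 2^{-n^*}$.

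The endgame is the two-sided estimate on $\|[v,b]\|_2$. Because $v \in N' \cap M$, for every $y \in N$ we have $[v,b] = [v,b-y]$, so $\|[v,b]\|_2 \le 2\|v\|_\infty\|b-y\|_2 \le 2\|b-y\|_2$; taking the infimum over $y \in N$ gives $\|[v,b]\|_2 \le 2 d(b,N)$. On the other hand, $\|[v,b]\|_2 \ge \|[u,b]\|_2 - 2\|u-v\|_2\|b\|_\infty > 2r - 2^{-f'(n)} - 2\|u-v\|_2$. Combining the two gives $d(b,N) > r - 2^{-(f'(n)+1)} - \|u-v\|_2$, and a routine arithmetic check using $f'(n) \ge n+3$ and $n^* \ge n+3$ shows the error terms sum to at most $2^{-n}$, as required.

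The main obstacle I anticipate is the mismatch between the spectral gap function (formally stated for rational points of $M^\#$) and the non-rational element $u \in M_1$ to which I apply it. I expect this to be handled by a density argument leveraging the strict inequality in the hypothesis: approximate $u$ by a rational point $q$ close enough in 2-norm that the commutator bounds $\|[q,a_i]\|_2 < 2^{-f'(n)}$ still hold strictly, apply the spectral gap function to $q$, and pass to the limit. A secondary concern is controlling the operator norm of the approximant in $N' \cap M$—this is precisely why the argument threads through the unitary $u'$ and invokes contractivity of $E_{N'\cap M}$, rather than attempting to truncate a generic approximant of $u$ directly.
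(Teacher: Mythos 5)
Your argument is correct and is essentially the paper's proof: extract a near-witness $u$ of the infimum, use the spectral gap function at parameter $g(n+2)+1$ together with the functional-calculus lemma to replace $u$ by an element of $N'\cap M$ of operator norm at most $1$, and then play the upper bound $\|[v,b]\|_2\le 2\,d(b,N)$ against the lower bound $\|[u,b]\|_2>2r-2^{-f'(n)}$ coming from the truncated-subtraction term. The differences are cosmetic: the paper first applies spectral gap and then runs the functional calculus inside $N'\cap M$ to get a unitary $u_2\in U(N'\cap M)$, using $\|u_2bu_2^*-b\|_2\le 2\,d(b,N)$, whereas you produce a unitary of $M$ and compress it by $E_{N'\cap M}$, using operator-norm contractivity; your explicit normalizations $f\ge\operatorname{id}$ and $g\ge\operatorname{id}$, and the rational-point approximation of $u$ before invoking the spectral gap function, are harmless adjustments that the paper's own estimates implicitly rely on as well.
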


\begin{proof}
Suppose $u\in M_1$ is such that  $\psi^M_{r,f'(n)}(u,x_1,\ldots,x_{f'(n)},b)<2^{-f'(n)}$.  Then there is $u_1\in N'\cap M$ such that $d(u,u_1)<2^{-g(n+2)-1}$. Note then that $\|u_1u_1^*-1\|_2<2^{-g(n+2)}$, whence there is $u_2\in U(N'\cap M)$ such that $d(u_1,u_2)<2^{-(n+2)}$.  It follows that
$$2r\dotminus \|[b,u_2\|\leq 2^{-f'(n)}+2^{-g(n+2)-1}+2^{-(n+2)}<2^{-n}.$$

If $s=d(b,N)$, we have elements $b_n\in N$ such that $d(b,b_n)\to s$; since $d(u_2bu_2^*,b_n)=d(b,b_n)$ (as $u_2b_nu_2^*=b_n$),. we have $d(b,u_2bu_2^*)\leq 2s$.  It follows that $2r-2\cdot 2^{-n}\leq \|[b,u_2]\|_2\leq 2s$, so $s\geq r-2^{-n}$.
\end{proof}

\begin{rmk}
Since $g$ is computable, $f'$ is $\d$-computable if $f$ is $\d$-computable.
\end{rmk}

%
%

\begin{lemma}\label{check}
Suppose that $M$ is e.c., that $N$ is a subfactor of $M$, and that $(N,(x_n)_{n\in \mathbb N})$ is a presentation of $N$.  If $b\in M$ is such that $d(b,N)=r$, then $\varphi_{r,m}^M(\vec x,b)=0$ for all $m\in \mathbb N$. 
\end{lemma}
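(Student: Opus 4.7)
The plan is to exploit that $M$ is existentially closed (e.c.): since $\psi^M_{r,m}(\cdot,\vec x, b)$ has the form of an approximate existential formula in the parameters $\vec x, b \in M$, the e.c.\ property implies
\[
\inf_{u\in M_1}\psi^M_{r,m}(u,\vec x, b) \;=\; \inf_{u\in P_1}\psi^P_{r,m}(u,\vec x, b)
\]
for every II$_1$-factor extension $P \supseteq M$, with $\vec x, b$ viewed inside $P$. So it suffices, for each $\epsilon > 0$, to produce some extension $P \supseteq M$ and some $u \in P_1$ with $\psi^P_{r,m}(u,\vec x, b) < \epsilon$.

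Decompose $b = E_N(b) + c$ with $E_N(c) = 0$ and $\|c\|_2 = r$. For a unitary $u \in N' \cap P$, one has $\|[u,b]\|_2 = \|ucu^*-c\|_2 \le 2\|c\|_2 = 2r$, with equality precisely when $ucu^* = -c$. I therefore aim to build $P \supseteq M$ together with a unitary $u \in P$ that commutes with $N$ (in particular with every $x_i$) and satisfies $ucu^* = -c$; this forces $\psi^P_{r,m}(u,\vec x, b) = 0$.

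To produce such a pair $(P, u)$, I would take $P := \widetilde M \rtimes_\alpha \mathbb{Z}/2$, where $\widetilde M \supseteq M$ is an appropriate II$_1$-factor extension and $\alpha \in \mathrm{Aut}(\widetilde M)$ is an order-two automorphism with $\alpha|_N = \mathrm{id}_N$ and $\alpha(c) = -c$. The canonical unitary $u$ implementing $\alpha$ in the crossed product is self-adjoint, commutes with $N$, and satisfies $ucu^* = -c$ by construction. The pair $(\widetilde M, \alpha)$ is built by an amalgamated free product / HNN construction of the Macintyre type suggested in the introduction: in an extension of $M$, one introduces a free copy of $c$ over $N$ and arranges a symmetry swapping this copy with a negated version of $c$, thereby realizing $\alpha$.

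The main obstacle is precisely the construction of $(\widetilde M, \alpha)$: taking $\widetilde M = M$ is generally not possible, because the $*$-subalgebra of $M$ generated by $N$ and $c$ need not admit a $\mathbb{Z}/2$-grading with $c, c^*$ odd, so the sign-flip $c \mapsto -c$ may not extend to a $*$-automorphism of $M$ itself. One must pass to a strictly larger II$_1$-factor, built by amalgamated free product over $N$, in which this grading is realized. Once $(\widetilde M, \alpha)$ and the implementing $u$ are in hand, the e.c.\ reduction of the first paragraph yields $\varphi^M_{r,m}(\vec x, b) = 0$, as required.
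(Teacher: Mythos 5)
Your use of existential closedness is fine, and your equality analysis is correct: for a unitary $u$ commuting with $N$ one has $\|[u,b]\|_2=\|ucu^*-c\|_2\le 2r$, with (near-)equality forcing $ucu^*$ (close) to $-c$. The genuine gap is exactly at the step you flag as the ``main obstacle'': the pair $(\widetilde M,\alpha)$ you need does not exist in general, and no amalgamated free product or HNN construction can produce it. Any automorphism of a II$_1$ factor $\widetilde M\supseteq M$ preserves the trace, so if $\alpha|_N=\operatorname{id}_N$ and $\alpha(c)=-c$ for the \emph{original} element $c=b-E_N(b)\in M$, then for all $n_1,n_2\in N$ one gets $\operatorname{tr}(cn_1cn_2c)=\operatorname{tr}(\alpha(c)n_1\alpha(c)n_2\alpha(c))=-\operatorname{tr}(cn_1cn_2c)$, so every odd $N$-mixed moment of $c$ must vanish (in particular $\operatorname{tr}(c^3)=0$ when $c=c^*$). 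These moments are computed inside $M$ and are unchanged by passing to a larger factor: enlarging the algebra adds new elements but cannot alter the joint distribution of $N$ and $c$. Hence for any $b$ whose $N$-centered part has a nonvanishing odd moment there is no extension containing a unitary commuting with $N$ that conjugates $c$ to $-c$, and the approximate version fails too (if $\|ucu^*+c\|_2$ is small, the same moment computation gives a quantitative contradiction; for the obstruction $\operatorname{tr}(c^3)\neq 0$ with $E_N(b)=0$ one needs no commutation hypotheses at all). Your ``free copy of $c$ over $N$'' can be swapped or negated, but $\psi^M_{r,m}(u,\vec x,b)$ refers to the original $b$, and it is the original $c$ that would have to be sent near $-c$.

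For comparison, the paper's proof does not try to reach $-c$: it forms $M_1=M*_N P$ with $P$ generated by $N$ and $b$, lets $b'$ be the free copy of $b$, and uses an HNN extension to obtain a unitary $u$ fixing $N$ with $ubu^*=b'$, so that $\|[u,b]\|_2=\|b-b'\|_2$. Note, however, that your own equality analysis points at the delicate spot in that argument as well: by freeness over $N$ the centered parts of $b$ and $b'$ are orthogonal, so $\|b-b'\|_2=\sqrt{2}\,r$ rather than the asserted $2r$, and the moment obstruction above shows that values near $2r$ cannot in general be achieved by any unitary commuting with (enough of) $N$ in any extension. So your proposal is not a proof, but the point where it breaks down is genuinely the critical point of the lemma: certifying lower bounds approaching $2\,d(b,N)$ rather than $\sqrt{2}\,d(b,N)$.
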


\begin{proof}
Let $P$ be the subalgebra of $M$ generated by $N$ and $b$.  Set $M_1=M*_{N}P$ and let $b'$ denote the other copy of $b$ in $M_1$.  Note that $d(b,b')=2r$.  Let $\theta:P\hookrightarrow M_1$ be given by $\theta|N=\operatorname{id}_N$ and $\theta(b)=b'$.  Let $M_2$ be the HNN extension of $M_1$ with respect to the embedding $\theta$.  (See \cite{HNN} for details on HNN extensiosn of von Neumann algebras.). In particular, $M_2$ is a II$_1$ factor with a unitary element $u$ so that $\|[u,a_i]\|_2=0$ for $i=1,\ldots,m$ while $\|[u,b]\|_2=d(b,b')=2r$, whence $\varphi^{M_2}(\vec x,b)=0$.  Since $M$ is e.c., $\varphi^M(\vec x,b)=0$, as desired.
\end{proof}

Here is the main theorem of this paper:

\begin{thm}
Suppose that $N$ is a w-spectral gap subfactor of the e.c. factor $M$ and that $M$ and $N$ have presentations $M^\#$ and $N^\dagger$ respectively so that the pair $(M^\#,N^\dagger)$ is a $\d$-computable pair for some oracle $\d$.  Further suppose that there is a $\d$-computable spectral gap function for $(M^\#,N^\dagger)$.  Then $E_N:M^\#\to N^\dagger$ is $\d$-computable.
\end{thm}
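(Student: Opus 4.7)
The goal is a $\d$-algorithm that, on input a rational point $p \in M^\#$ and $k \in \mathbb N$, outputs a rational number within $2^{-k}$ of $d(p, N)$; by the lemma of Section 2.3 equating $\d$-computability of $E_N$ with that of distances to $N$, this suffices. The plan is to run two interleaved searches producing upper and lower bounds $U \geq d(p, N) \geq L$ and halt as soon as $U - L < 2^{-k}$.

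For upper bounds, enumerate the rational points $q$ of $N^\dagger$ and, using the $\d$-computability of the inclusion $i : N^\dagger \to M^\#$ and of $M^\#$, approximate $\|p - i(q)\|_2$ to arbitrary precision; by density of rational points of $N^\dagger$ in $N_1$, these approach $d(p, N)$ from above. For lower bounds, fix $m := f'(k+2)$, where $f'$ is the $\d$-computable function produced from the given spectral gap function as in Lemma \ref{helpful}. Using Lemma \ref{cancalculate}, dovetail over all pairs $(r, u)$ with $r \in [0,1] \cap \mathbb Q$ and $u$ a rational point of $M^\#$, $\d$-approximating $\psi^M_{r, m}(u, a_1, \ldots, a_m, p)$ to sufficient precision to certify (when possible) the strict bound $\psi^M_{r, m}(u, a_1, \ldots, a_m, p) < 2^{-m}$; each such certificate invokes Lemma \ref{helpful} to update $L := \max(L, r - 2^{-(k+2)})$.

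Termination is driven by Lemma \ref{check} combined with the e.c. hypothesis. Setting $r_0 := d(p, N)$, the unitary $u_0$ built from the amalgamated-free-product/HNN construction in the proof of Lemma \ref{check} lies in an extension $M_2 \supseteq M$ and satisfies $\|u_0 u_0^* - 1\|_2 = 0$, $\|[u_0, a_i]\|_2 = 0$ for $i \leq m$, and $\|[u_0, p]\|_2 = 2 r_0$; hence $\psi^{M_2}_{r, m}(u_0, a_1, \ldots, a_m, p) = 0$ for every rational $r \leq r_0$, which the e.c. hypothesis transfers to $\varphi^M_{r, m}(a_1, \ldots, a_m, p) = 0$. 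Since $\psi^M_{r, m}$ is Lipschitz in its first argument (its constituent terms are linear or quadratic on $M_1$) and the rational points of $M^\#$ are $2$-norm dense in $M_1$, an actual rational $u$ witnessing $\psi^M_{r, m}(u, a_1, \ldots, a_m, p) < 2^{-m}$ exists and is eventually caught by the search. Applied to any rational $r \in (r_0 - 2^{-(k+3)}, r_0]$, this forces $L > r_0 - 2^{-(k+3)} - 2^{-(k+2)}$, while the upper-bound search eventually produces $U < r_0 + 2^{-(k+3)}$, yielding $U - L < 2^{-k}$.

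The conceptual bridge to build is between the soft existence statement of Lemma \ref{check} (an abstract witness produced via amalgamation and ultrapowers) and the algorithm's need for an honest rational certificate verifiable in finite time. This is mediated by the e.c. hypothesis (pulling the witness back into $M$), continuity together with density (discretizing it to a rational point), and a slight tightening of the verification threshold for the strict inequality (so that $\d$-approximating $\psi$ to a computable precision actually certifies the desired bound).
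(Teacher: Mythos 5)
Your proposal is correct and follows essentially the same route as the paper: reduce to approximating $d(p,N)$ via the lemma of Section 2.3, run one machine enumerating upper bounds from rational points of $N^\dagger$ through the $\d$-computable inclusion, and a second machine obtaining lower bounds by certifying $\psi^M_{r,m}<2^{-m}$ (Lemmas \ref{cancalculate} and \ref{helpful}), with termination guaranteed by the e.c./HNN witness of Lemma \ref{check}. Your write-up merely makes explicit a few points the paper leaves implicit (monotonicity in $r$, discretizing the witness to a rational point, and the margin needed to certify a strict inequality from approximations), which are refinements of, not departures from, the paper's argument.
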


\begin{proof}
Suppose we are given a rational point $p$ of $M^\#$ and $k\in \mathbb N$.  We use $\d$ to compute $d(p,N^\#)$ to within $2^{-k}$ using two machines.

On the first machine, we start approximately computing $d(i(p'),p)$, where $p'$ ranges over rational points of $N^\dagger$.  This is done by finding rational $p''\in M^\#$ such that $d(i(p'),p'')$ is small, and then computing $d(p'',p)$ approximately.  This machine thus enumerates upper bounds for $d(p,N^\#)$.

We next use $\d$ to compute $f'(k)$.  On the second machine, we start computing $\psi_{r,f'(k)}(u,\vec p,b)$ for rational points $u$ and rational numbers $r$; by Lemma \ref{cancalculate}, this can be done using $\d$.  If we see $\psi^M_{r,f'(k)}(u,\vec p,b)<2^{-f'(k)}$, then we know that $d(b,N)\geq r-2^{-k}$ by Lemma \ref{helpful}.

We then wait until there is a rational number $r>0$ so that the first machine tells us that $d(b,N)<r$ and the second machine tells us that $d(b,N)\geq r-2^{-k}$.  By Lemma \ref{check}, this is guaranteed to happen.
\end{proof}

\begin{cor}
Suppose that $M$ is e.c. and $N$ is a property (T) subfactor of $M$.  Let $N^\dagger$ be a Kazhdan presentation of $N$.  If $(M^\#,N^\dagger)$ is a $\d$-computable pair, then $E_N$ is $\d$-computable.
\end{cor}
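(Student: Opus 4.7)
The plan is to deduce this corollary from the main theorem of the paper by verifying the two hypotheses of that theorem that are not immediately in hand: that $N$ has w-spectral gap in $M$, and that there is a $\d$-computable spectral gap function for the pair $(M^\#, N^\dagger)$.

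For the first hypothesis, I would argue that any property (T) subfactor $N$ of a II$_1$ factor $M$ automatically has w-spectral gap in $M$. Given $\epsilon > 0$, invoke Fact \ref{kazhdan} to produce a Kazhdan pair $(F,K)$ for $N$ and the threshold $\epsilon_0>0$, and set $\delta := \min(\epsilon/K,\epsilon_0)$. View $H := L^2(M)$ as an $N$-$N$ bimodule. If $x\in M_1$ satisfies $\max_{y\in F}\|[x,y]\|_2<\delta$, then applying Fact \ref{kazhdan} to $x$ (after normalization, passing to $x/\|x\|_2$ when $x\neq 0$) yields a central vector in $H$, namely an element $x'\in N'\cap M$, with $d(x,x')<\epsilon$. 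This verifies that $N$ is a w-spectral gap subfactor of $M$.

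For the second hypothesis, since $N^\dagger$ is a Kazhdan presentation, the earlier Corollary (the one immediately following the lemma defining $j$) produces a \emph{computable} spectral gap function $f$ for $(M^\#, N^\dagger)$. A computable function is $\d$-computable for any oracle $\d$, so this gives the required $\d$-computable spectral gap function. Combined with the standing hypothesis that $(M^\#, N^\dagger)$ is a $\d$-computable pair and that $M$ is e.c., every hypothesis of the main theorem is met, and we conclude that $E_N : M^\# \to N^\dagger$ is $\d$-computable.

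The only delicate point in this plan is the normalization in the w-spectral gap argument, since Fact \ref{kazhdan} is phrased for unit vectors while an arbitrary element of $M_1$ may have strictly smaller 2-norm; one must apply the Kazhdan condition to $x/\|x\|_2$ (with the trivial case $x=0$ handled separately) and then track the resulting constant. This is routine but should be written out carefully. Otherwise the corollary is essentially a bookkeeping consequence of results already established in the paper.
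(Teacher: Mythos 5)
Your proposal is correct and follows essentially the paper's own (implicit) route: the corollary is stated as an immediate consequence of the main theorem, using the Section 3 corollary to get a computable (hence $\d$-computable) spectral gap function from the Kazhdan presentation, together with the standard fact that a property (T) subfactor has w-spectral gap via the Connes--Jones result applied to $L^2(M)$. The normalization issue you flag (and the related point that a central vector in $L^2(M)$ must be replaced by an element of $N'\cap M$, e.g.\ by averaging over unitaries of $N$) is real but routine, and the paper glosses over the same subtleties in its Section 3 argument.
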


\end{document}